\title[Subdivision rule constructions on quadratic matings]{Subdivision rule constructions on critically preperiodic quadratic matings}
\author{Mary Wilkerson}
\address{} 
\email{mwilkerso@coastal.edu}   
\keywords{mating, finite subdivision rule}
\subjclass{Primary 37F20; Secondary 37F10}
\newtheorem{theorem}{Theorem}[section]
\theoremstyle{definition}
\newtheorem{definition}[theorem]{Definition}
\newtheorem{example}[theorem]{Example}
\begin{document} 
 

\begin{abstract}
Mating is an operation that identifies the domains of a polynomial pair in order to obtain a new map on the resulting quotient space. The dynamics of the mating are then dependent on the two polynomials and the manner in which the quotient space was defined, which can be difficult to visualize. This research addresses using Hubbard trees and finite subdivision rules as tools to examine quadratic matings with preperiodic critical points. In many cases, discrete parameter information on such quadratic pairs can be translated into topological information on the dynamics of their mating. The central theorems in this work provide methods for explicitly constructing subdivision rules that model non-hyperbolic matings. We follow with several examples and connections to the current literature.
\end{abstract}

\maketitle
\tableofcontents


\section{Introduction}

Even the simplest of rational maps can have surprisingly complicated dynamics. Many rational maps may exhibit polynomial-like behavior though, which is better understood. In the early 1980's, Douady described \emph{polynomial mating}---a way to combine two polynomials in order to obtain a new map with shared dynamics from both original maps \cite{DOUADY}. Frequently, this mating is dynamically similar to a rational map. When rational maps are topologically conjugate to matings, we can examine the dynamics of the constituent polynomials in the mating to better understand the rational map.

So, what is a mating? Suppose we consider the compactification $\widetilde{\mathbb{C}}$ of $\mathbb{C}$ given by adding in the circle at infinity, $\widetilde{\mathbb{C}}=\mathbb{C}\cup\{\infty\cdot e^{2\pi i \theta}|\theta \in \mathbb{R}/\mathbb{Z}\}$. Then, we take two polynomials of the same degree with connected filled Julia sets acting on two disjoint copies of $\widetilde{\mathbb{C}}$. If we use these domains to form a quotient space in an appropriate manner, our polynomial pair will determine a map that descends to this new space. The map on the quotient space is called a \emph{mating} of the two polynomials. The different kinds of polynomial matings are each dependent upon how we identify points on our copies of $\widetilde{\mathbb{C}}$. (While we provide more details later, an excellent overview of some fundamental mating constructions is given in \cite{MATINGS}.)

In a topological mating, the domain is given by a quotient space which identifies the boundaries of two filled Julia sets. The resulting domain can sometimes be surprising: by results of Lei, Rees, and Shishikura, it is possible to develop an equivalence relation on two connected filled Julia sets---including ones with no interior---such that the associated quotient space is a topological two-sphere \cite{MATINGSOFQUADRATICS},\cite{REES},\cite{SHISHIKURA}.  In \cite{MEDUSA}, a general method is presented for developing the mating resulting from a given polynomial pairing---but this method is best suited for the hyperbolic case. As visualization of how the boundary identifications develop can be useful, this paper presents an option for the case of two critically preperiodic polynomials.  

The combinatorial construction given in this paper develops polynomial parameters into more extensive information on a mating by using a discrete model. We do this by looking at a simplified, combinatorial model of the Julia set---i.e., the \emph{Hubbard tree}--- for both of the polynomials we intend to mate. We examine the identifications between Hubbard trees that occur in forming the quotient space for the essential mating, and how these can be used to obtain a 1-skeleton for the tiling of a finite subdivision rule. The author expands here upon preliminary results given in \cite{DISSERTATION}.

In \S \ref{prereqs}, we detail the prerequisites needed to define and construct polynomial matings. We also describe finite subdivision rules and Hubbard trees, and why their use is relevant here. 

In \S \ref{construction} we introduce the essential construction for obtaining finite subdivision rules from matings, and demonstrate using several examples. We then close with connections to the current literature and future avenues for exploration in \S \ref{connections}. 

\section{Prerequisites}\label{prereqs}

\subsection{Parameter Space}\label{parameter} Suppose that $c$ is contained in the Mandelbrot set, and that $K_c$ is the filled Julia set of the map $f_c(z)=z^2+c$. Since this implies that $K_c$ is connected, $\widehat{\mathbb{C}}\backslash K_c$ is conformally isomorphic to the complement of the closed unit disk via some holomorphic map $\phi: \widehat{\mathbb{C}}\backslash\overline{\mathbb{D}} \rightarrow \widehat{\mathbb{C}}\backslash K_c$. The map $\phi$ can be chosen to 
conjugate $z\mapsto z^2$ on $\widehat{\mathbb{C}}\backslash\overline{\mathbb{D}}$ to $f_c$ on $\hat{\mathbb{C}}\backslash K_c$ so that $\phi(z^2) = f_c(\phi(z))$, in which case $\phi$ is a unique map.

Taking the image of rays of the form $\{re^{2\pi it} :r\in(1,\infty)\}$ under $\phi$ for fixed $t\in \mathbb{R}/\mathbb{Z}$ then yields the \emph{external ray} of angle $t$, $R_c(t)$. (See Figure \ref{varphi}.) If $K_c$ is locally connected, the map $\phi$ extends continuously to a map from the unit circle to the Julia set $J_c$ and external rays of angle $t$ are said to \emph{land} at the point $\gamma(t) = \displaystyle\lim_{r\rightarrow1^+} \phi(re^{2\pi it})$. The map $\gamma:\mathbb{R}/\mathbb{Z}\rightarrow{J_c}$ is called the \emph{Carath\'{e}odory semiconjugacy}, with the associated identity $$\gamma(2\cdot t)=f_c(\gamma(t))$$ in the degree 2 case. This identity allows us to easily track forward iteration of external rays and their landing points in $J_c$ by doubling the angle of their associated external rays modulo 1.

\begin{figure}[hbt]
\center{\includegraphics{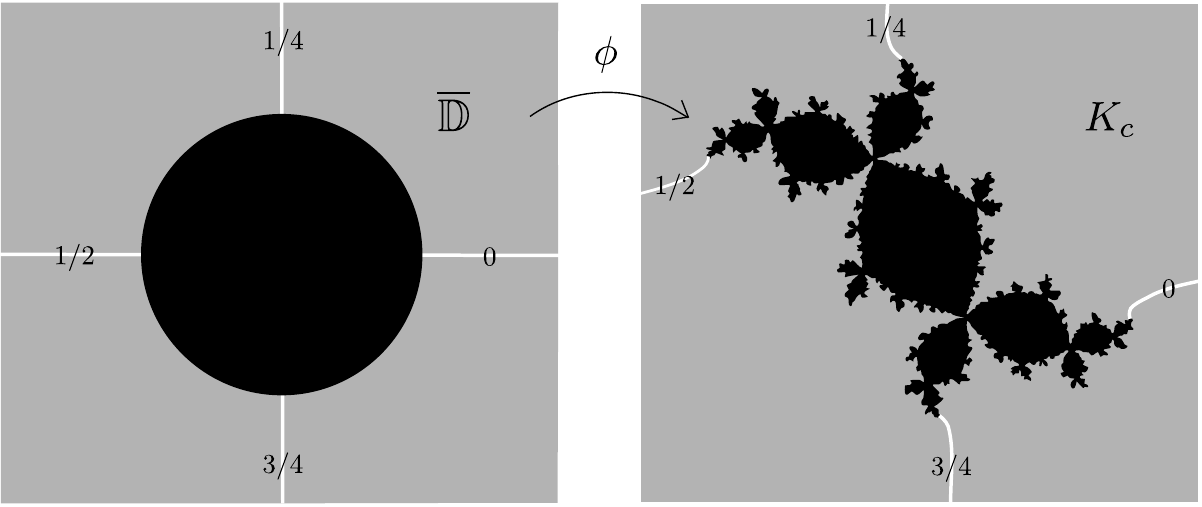}}
\caption{The conformal isomorphism $\phi$ and selected external rays on the rabbit polynomial.}
\label{varphi}
\end{figure}

The work in this paper will be restricted to the use of polynomials whose parameters are obtained from  \emph{Thurston-Misiurewicz} points--values of $c$ on the boundary of the Mandelbrot set at which the critical point of $f_c$ is strictly preperiodic. Critically preperiodic polynomials are typically parameterized by the angle $\theta$ of the external ray landing at the critical value rather than by the critical value. We will follow this convention from this point on, using $f_\theta$ in lieu of $f_c$. These critically preperiodic polynomials have filled Julia sets that are \emph{dendrites}: locally connected continuums that contain no simple closed curves. In other words, the filled Julia set of such a polynomial possesses a possibly infinite tree-like structure, has no interior, and is the Julia set of the polynomial. Further, since these $f_\theta$ have Julia sets that are locally connected, recall that external rays land on $J_\theta$. This means that the conformal isomorphism $\phi$ and Carath\'{e}odory semiconjugacy $\gamma$ can be used to recover the mapping behavior of $f_\theta$ on its Julia set. As a brief example, consider Figure \ref{labelrays}: we could obtain that the critical orbit is preperiodic and follows the pattern $c_0\mapsto c_1 \mapsto c_2\mapsto c_3\mapsto c_2$ by evaluation in $f_{1/6}$, or we could double the angles of external rays landing at these points to obtain the same pattern.

\begin{center}\end{center}

\begin{figure}[htb]
\center{\includegraphics{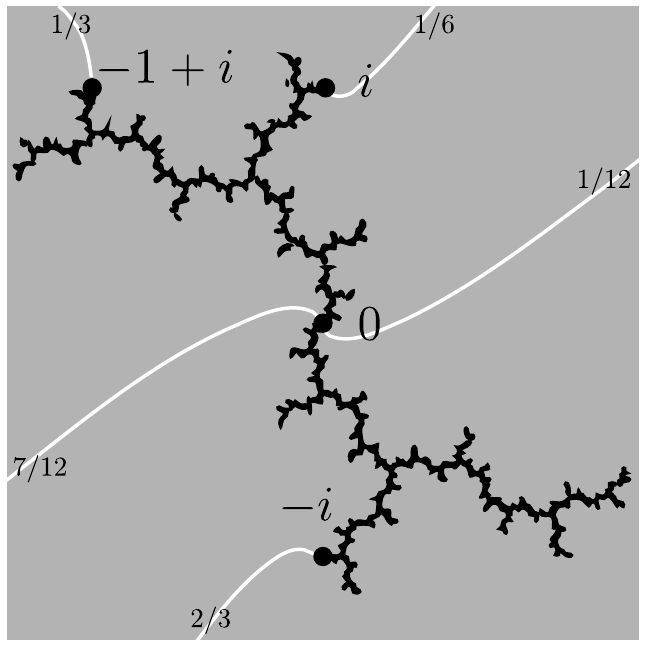}}
\caption{External rays landing on the critical orbit of $f_{1/6}(z)=z^2+i$.}
\label{labelrays}
\end{figure}

\subsection{Matings}\label{mating}  

Let $f_\alpha:\widetilde{\mathbb{C}}_\alpha\rightarrow\widetilde{\mathbb{C}}_\alpha$ and $f_\beta:\widetilde{\mathbb{C}}_\beta\rightarrow\widetilde{\mathbb{C}}_\beta$ be postcritically finite monic quadratic polynomials taken on two disjoint copies of $\widetilde{\mathbb{C}}$. Form the topological two-sphere $\mathbb{S}^2$ by taking $\mathbb{S}^2=\widetilde{\mathbb{C}}_\alpha\bigsqcup \widetilde{\mathbb{C}}_\beta/\sim_f$, where $\sim_f$ identifies $\infty \cdot e^{2\pi i t}$ on $\widetilde{\mathbb{C}}_\alpha$ with $\infty \cdot e^{-2\pi i t}$ on $\widetilde{\mathbb{C}}_\beta$. This yields a topological two-sphere by gluing two copies of $\tilde{\mathbb{C}}$ together along their circles at infinity with opposing angle identifications. (See Figure \ref{green}.) This quotient space serves as the domain of the \emph{formal mating} $f_\alpha\upmodels_ff_\beta$, which is the map that applies $f_\alpha$ and $f_\beta$ on their respective hemispheres of $\mathbb{S}^2$. The Carath\'{e}odory semiconjugacy guarantees that $f_\alpha\upmodels_ff_\beta$ is well-defined on the equator and provides a continuous branched covering of $\mathbb{S}^2$ to itself. We will use $F=f_\alpha\upmodels_ff_\beta$ to denote the formal mating whenever it is unambiguous to do so.

\begin{figure}[h]
\center{\includegraphics{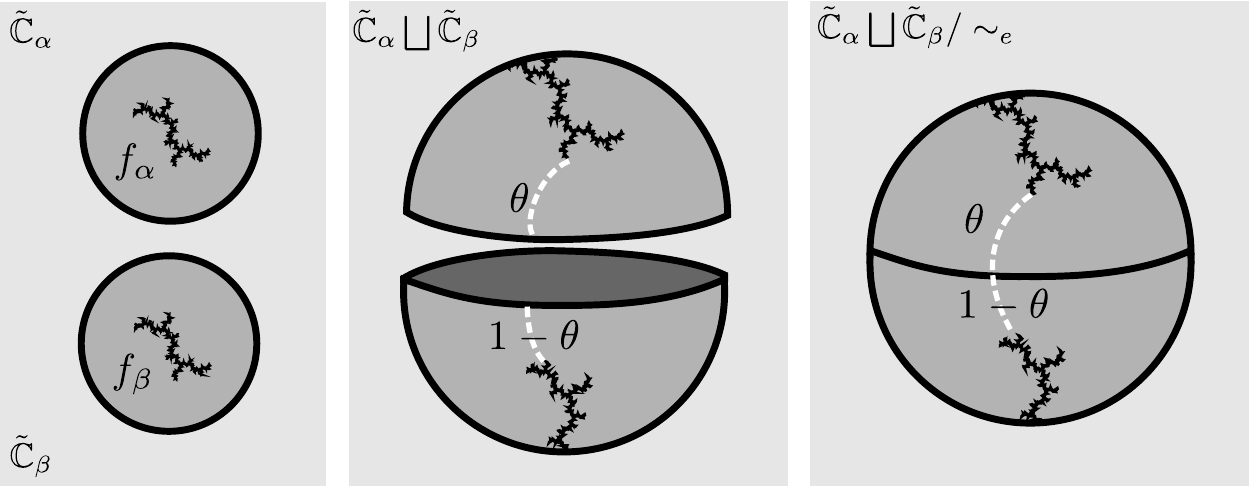}}
\caption{Steps in the formation of the formal mating.}
\label{green}
\end{figure}

The \emph{topological mating} $f_\alpha\upmodels_tf_\beta$, on the other hand, is formed by using the quotient space $K_\alpha\bigsqcup K_\beta/\sim_t$, where $\sim_t$ identifies the landing point of $R_\alpha(t)$ on $J_\alpha$ with the landing point of $R_\beta(-t)$ on $J_\beta$. This glues the Julia sets of $f_\alpha$ and $f_\beta$ together at opposing external angles. Similar to the formal mating, we obtain the map $f_\alpha\upmodels_tf_\beta$ by applying $f_\alpha$ and $f_\beta$ on their respective filled Julia sets. The Carath\'{e}odory  semiconjugacy similarly guarantees that the resulting map is well-defined and continuous, but it is possible that it no longer acts on a quotient space which is a topological two-sphere.

The quotient space obtained in developing the topological mating sometimes is a two-sphere, however---and further, $f_\alpha\upmodels_tf_\beta$ may be topologically conjugate to a rational map on the Riemann sphere. Such a rational map is called the \emph{geometric mating} of $f_\alpha$ and $f_\beta$. The following elegant result highlights a case that we will restrict our examination to in this paper: 

\begin{theorem}[Lei, Rees, Shishikura] The topological mating of the postcritically finite maps $z\mapsto z^2+c$ and $z\mapsto z^2+c'$ is Thurston equivalent to a rational map on $\hat{\mathbb{C}}$ if and only if $c$ and $c'$ do not lie in complex conjugate limbs of the Mandelbrot set \cite{MATINGSOFQUADRATICS}, \cite{REES}, \cite{SHISHIKURA}. 

\end{theorem}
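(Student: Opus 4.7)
The plan is to invoke Thurston's topological characterization of rational maps: a postcritically finite branched self-cover of $S^2$ with hyperbolic orbifold is Thurston-equivalent to a rational map if and only if it admits no invariant multicurve whose transition matrix has spectral radius at least $1$ (a \emph{Thurston obstruction}). The formal mating $F = f_\alpha \upmodels_f f_\beta$ is postcritically finite of degree $2$, with postcritical set $P_F = P_\alpha \cup P_\beta$ sitting in the two open hemispheres, so Thurston's theorem applies. The argument then splits into showing that (a) Thurston-equivalence of $F$ to a rational map descends to a topological conjugacy of the topological mating with that rational map, and (b) $F$ admits a Thurston obstruction precisely when $c$ and $c'$ lie in complex conjugate limbs of the Mandelbrot set.

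For (a) I would use that the topological mating is the Moore-decomposition quotient of $S^2$ by the ray-equivalence relation, and a combination of Moore's theorem with Thurston rigidity upgrades Thurston-equivalence on the formal side to honest topological conjugacy on the topological side. For the ``only if'' half of (b), suppose $c$ and $c'$ lie in conjugate limbs $\mathcal{L}_{p/q}$ and $\mathcal{L}_{-p/q}$. Then the $\alpha$-fixed points of $f_\alpha$ and $f_\beta$ are landing points of rays at angles of the form $k/(2^q-1)$, and these angle sets are exchanged by $t \mapsto -t$, so the two $\alpha$-fixed points get identified under $\sim_t$. Chaining the $q$ rays landing at each fixed point produces a loop of identifications separating the postcritical set; from this I would extract a Levy cycle, a single essential simple closed curve whose appropriate preimage component maps homeomorphically back to it, forcing spectral radius $1$ and obstructing $F$.

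The ``if'' direction is the main obstacle and the heart of the Lei--Rees--Shishikura argument. I would follow their strategy: by Levy's theorem in degree $2$ any obstruction reduces to a Levy cycle, and hence to a single essential curve $\gamma$ with a homeomorphic preimage component. Tracing how $\gamma$ must meet the equator and its iterated preimages, one argues via angle-doubling dynamics that $\gamma$ arises from a ray-equivalence loop enclosing matched $\alpha$-fixed points, and the matching condition on landing angles then forces $c$ and $c'$ into conjugate limbs, contradicting the hypothesis. The technical core is this combinatorial control of arbitrary ray-equivalence loops --- exactly what required the combined efforts of Lei, Rees, and Shishikura in the literature.
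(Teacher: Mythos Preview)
The paper does not prove this theorem. It is stated as a known result with citations to Lei \cite{MATINGSOFQUADRATICS}, Rees \cite{REES}, and Shishikura \cite{SHISHIKURA}, and then used as a black box to guarantee that the matings under consideration have two-sphere quotients. There is therefore nothing in the paper to compare your proposal against.

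As a sketch of the original literature, your outline is broadly accurate: Thurston's characterization, reduction to Levy cycles in degree $2$, and the identification of the $\alpha$-fixed points in conjugate limbs as the source of the obstruction are indeed the skeleton of the Lei--Rees--Shishikura argument. One point to be careful about: you propose to apply Thurston's theorem directly to the formal mating $F$, but as the paper itself notes later, $F$ need not be Thurston-equivalent to the topological mating when distinct postcritical points of $F$ lie in the same $\sim_t$ class. In that situation one must pass to the essential (or degenerate) mating before invoking Thurston, and the Rees--Shishikura machinery handles precisely this collapsing step. Your part (a) glosses over this, and a full proof would need to address it.
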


This is useful since it allows us to determine if a polynomial mating will have a domain given by a quotient two-sphere based on parameters alone. Given that this two-sphere is obtained by identifying the boundaries of two Julia sets, and that one or both of these Julia sets may be dendrites though, this result may appear somewhat counterintuitive.

To assist in understanding how boundary identifications come together in the quotient space of the topological mating, we will examine the \emph{essential} mating, $f_\alpha\upmodels_ef_\beta$. (Similar to our convention for the formal mating, we will use $E=f_\alpha\upmodels_ef_\beta$ to denote the essential mating whenever it is unambiguous to do so.) Starting with the quotient two-sphere $\mathbb{S}^2$ developed in the formal mating $F$, the essential mating is constructed as detailed below and in \cite{MATINGSOFQUADRATICS}.

\begin{definition}\label{thm:essential} Let $\{l_1,...,l_n\}$ be the set of connected graphs of external rays on $\mathbb{S}^2$ containing at least two points of the postcritical set$P_F$, and let $\{\tau_1,...,\tau_m\}$ be the set of connected graphs of external rays in $\displaystyle\bigcup_{k\in\mathbb{N}}\bigcup_{i=1}^nF^{-k}(l_i)$ containing at least one point on the critical orbit of $F$. Take each of the $\{\tau_1,...,\tau_m\}$ to be an equivalence class of the equivalence relation $\sim_e$. Note that $\mathbb{S}'^2=\mathbb{S}^2/\sim_e$ is homeomorphic to a sphere. Further, $F$ maps equivalence classes to equivalence classes, so letting $\pi:\mathbb{S}^2\rightarrow\mathbb{S}'^2$ denote the natural projection yields that $\pi\circ F\circ\pi^{-1}$ is well-defined and preserves the mapping order of the equivalence classes $\{\tau_1,...,\tau_m\}$.

This composition is not a branched covering, though. To rectify this, set $V_j$ to be an open neighborhood of $\tau_j$ such that $V_j\cap(P_F\cup\Omega_F)=\tau_j\cap(P_f\cup\Omega_F)$ for each $j$, and such that distinct $V_j$ are nonintersecting. For each $j$, denote by $\{U_{ij}\}$ the set of connected components of $F^{-1}(V_j)$ for which $U_{ij}\cap\displaystyle\bigcup_{p=1}^m\tau_p=\emptyset$. 

Finally, we set $E:\mathbb{S}'^2\rightarrow\mathbb{S}'^2$ as equivalent to $\pi\circ F\circ\pi^{-1}$ off of the set $\displaystyle\bigcup_{i,j}\pi(U_{ij})$, and for each $i,j$ set $E:\pi(U_{ij})\rightarrow\pi(V_j)$ to be a homeomorphism that extends continuously to the boundary of each $\pi(U_{ij})$.  $E$ is the \emph{essential mating} of $f_\alpha$ and $f_\beta$.
\end{definition} 

Despite the appearance of $E$ being defined rather arbitrarily in the last step, the essential mating is uniquely determined up to Thurston equivalence, and is in fact a degree 2 branched covering map which is Thurston-equivalent to the associated topological mating. In a sense, the essential mating captures the ``essential" identifications--i.e., mostly ones on the critical orbit--that are made in forming the topological mating. $E$ mostly behaves like the map $F$, with the fundamental difference being that the domain and range of $E$ are a quotient space where these important identifications on the critical orbit of $F$ are collapsed together. It should thus be noted that if no postcritical points of $F$ can be connected by a graph of adjacent external rays on $\mathbb{S}^2$, then $\sim_e$ is the trivial equivalence relation and the essential mating is the formal mating.

 \subsection{Finite Subdivision Rules}\label{fsr} Our ultimate motivation in examining the essential mating is to develop a tiling construction that highlights the identifications formed in the topological mating. We will develop this construction using \emph{finite subdivision rules}. 
 
 \begin{definition} A \emph{finite subdivision rule} $\mathcal{R}$ consists of the following three components: 

\begin{enumerate}

\item A tiling. Formally, this is a finite 2-dimensional CW complex $S_\mathcal{R}$, called the subdivision complex, with a fixed cell structure such that $S_\mathcal{R}$ is the union of its closed 2-cells. We assume that for each closed 2-cell $\tilde{s}$ of $S_\mathcal{R}$ there is a CW structure $s$ on a closed 2-disk such that $s$ has $\geq 3$ vertices, the vertices and edges of $s$ are contained in $\partial s$, and the characteristic map $\psi_s:s\rightarrow S_\mathcal{R}$ which maps onto $\tilde{s}$ restricts to a homeomorphism on each open cell.

\item A subdivided tiling. Formally, this is a finite 2-dimensional CW complex $\mathcal{R}(S_\mathcal{R})$ which is a subdivision of the above CW complex $S_\mathcal{R}$.

\item A continuous cellular map $g_\mathcal{R}: \mathcal{R}(S_\mathcal{R})\rightarrow S_\mathcal{R}$, called the subdivision map, whose restriction to any open cell is a homeomorphism. \textup{\cite{FSRS} }

\end{enumerate}
\end{definition}

In essence, a finite subdivision rule is a finite combinatorial rule for subdividing tilings on some 2-complex. We restrict, however, to tilings formed by ``filling in" connected finite planar graphs on a two-sphere with open tiles that are topological polygons. None of these tiles are allowed to be monogons or digons, and further, each edge of the tiling must be a boundary edge to some tile. These tiles may be non-convex, though---to the potential extreme of allowing both sides of a single edge to form two sides of the boundary of a single tile. (For example, a line segment with both end points and the midpoint marked on the two-sphere forms the boundary of a topological quadrilateral.) 

Once we subdivide a tiling, we will need a map that takes open cells of the subdivision tiling homeomorphically to open cells of the original tiling. Only when we have all three components---the initial tiling, the subdivision tiling, and a subdivision map---do we have a complete finite subdivision rule. Then, this rule can be applied recursively to yield iterated subdivisions of the original tiling.

\begin{example} Consider Figure \ref{fig:fsr}: $\hat{\mathbb{C}}$ is oriented so that the marked points $0, \pm1,$ and $\infty$ all lie on the equator. The equator and marked points determine a graph which yields a tiling of $\hat{\mathbb{C}}$ into two topological quadrilaterals. If we take a preimage of this structure under the map $z\mapsto z^2$, we obtain a tiling that has four quadrilaterals---each of which maps homeomorphically onto one of the quadrilaterals in the original tiling. Here, the structure on the left is our tiling, the structure on the right is the subdivided tiling, and the map $z\mapsto z^2$ is the subdivision map.

\end{example}

While a finite subdivision rule may be defined using analytic maps and embedded tilings as in the previous example, this is not necessary. We can use the mapping behavior of $n$-cells in a tiling to determine the mapping behavior of ($n+1$)-cells, thus obtaining a subdivision map based on combinatorial data. The reader may reference Cannon, Floyd, and Parry in \cite{FSRS} for a more detailed treatment of this topic.

\begin{figure}[htb]
\center{\includegraphics{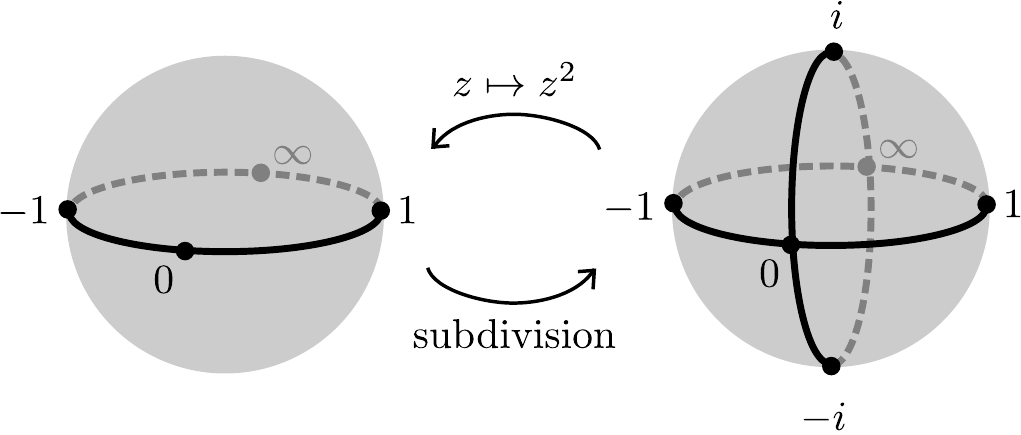}}
\caption{A rudimentary tile subdivision.}
\label{fig:fsr}
\end{figure}

 \subsection{Hubbard trees}\label{Hubbard}

In order to build a finite subdivision rule later on, it will be helpful to have a finite invariant structure in mind to determine the tiling. The Julia set is invariant under iteration of its associated polynomial, but the structure of the Julia set is more complicated than we would like to use as a starting point for a finite subdivision rule. Thus, we would like to work with a discrete approximation to the Julia set: the \emph{Hubbard tree}.

(Note: Hubbard Trees are defined in \cite{SQUARE1} using \emph{allowable arcs}. The construction of an allowable arc is simplified considerably for the case where $f$ has a dendritic Julia set, so for the reader's convenience we present a definition restricted to this case here.)

\begin{definition} Let $f_\theta: \mathbb{C} \rightarrow \mathbb{C}$ be given by $f_\theta(z) = z^2 + c$ for some Misiurewicz point $c$, and let $f_\theta$ have Julia set $J_\theta$ and postcritical set $P_{f_\theta}$.

We say that a subset $X$ of $J_\theta$ is \emph{allowably connected} if $x,y\in X$ implies that there is a topological arc in $X$ that connects $x$ and $y$. The \emph{allowable hull} of a subset $A$ in $J_\theta$ is then the intersection of all allowably connected subsets of $J_\theta$ which contain $A$. Finally, the \emph{Hubbard tree} of $f_\theta$ is the allowable hull of $P_{f_\theta}$ in $J_\theta$.
\end{definition}

\begin{figure}[htb]
\center{\includegraphics{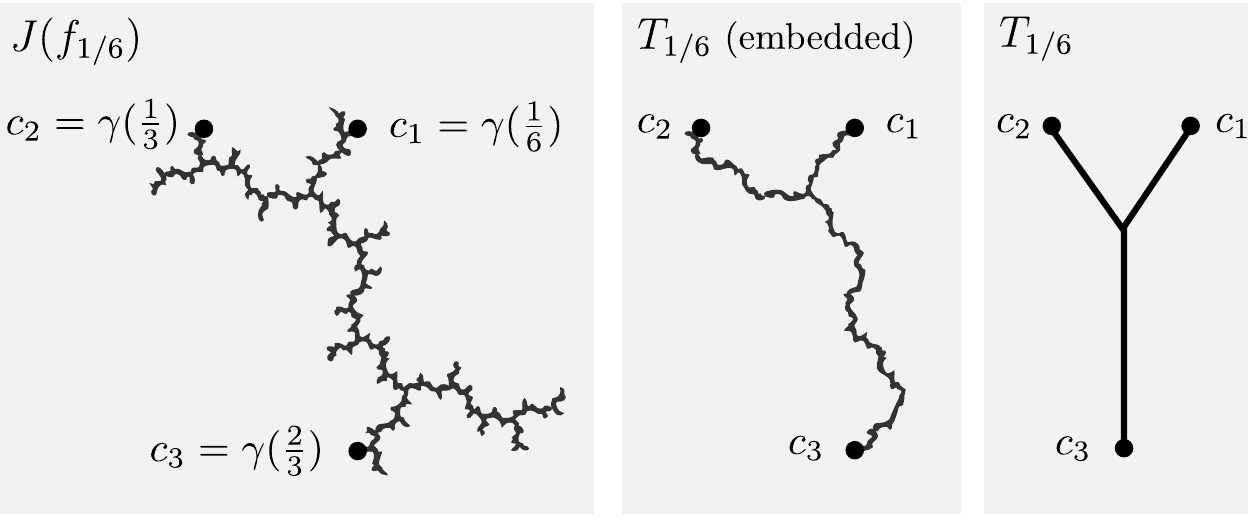}}
\caption{The Julia set and Hubbard trees for $f_{1/6}(z)=z^2+i$.}
\label{hubbardtree}
\end{figure}

The Hubbard tree as defined above is embedded in $\mathbb{C}$ and topologically equivalent to the notion of an \emph{admissible Hubbard tree} with preperiodic critical point as discussed in \cite{HUBBARDTREES}. The notes of Bruin and Schleicher's in \cite{HUBBARDTREES}, however, emphasize the combinatorial structure of the Hubbard tree as a graph with vertices marked by elements of $P_\theta$, rather than as an embedded object in the complex plane. (See Figure \ref{hubbardtree}.) They present several explicit algorithms that can be used to construct a topological copy of $T_\theta$ from the parameter $\theta$, building heavily on the notion that quadratic maps are local homeomorphisms off of their critical points, and degree two at their critical points. We can further expand upon these observations regarding the behavior of quadratic polynomials to determine how forward images and preimages of the Hubbard tree $T_\theta$ under $f_\theta$ will present: forward images are invariant and map the tree onto itself, every point in $T_\theta$ has at most two inverse images under $f_\theta$, $f_\theta$ acts locally homeomorphically on $T$ everywhere except at the critical point, and subsequent preimages  of $T_\theta$ under $f_\theta$ give discrete approximations to $J_\theta$. (The $n$th preimage of an tree $T$ under its associated polynomial $f$ contains $2^n$ miniature copies of the tree which each map homeomorphically onto the tree via $f^{\circ n}$, as in Figure \ref{hubbardpreim}.) In addition, Hubbard trees have many desirable characteristics that we will later require the 1-skeletons of subdivision complexes to possess--namely, being planar, finite,  forward invariant, and containing the postcritical set.

\begin{figure}[htb]
\center{\includegraphics{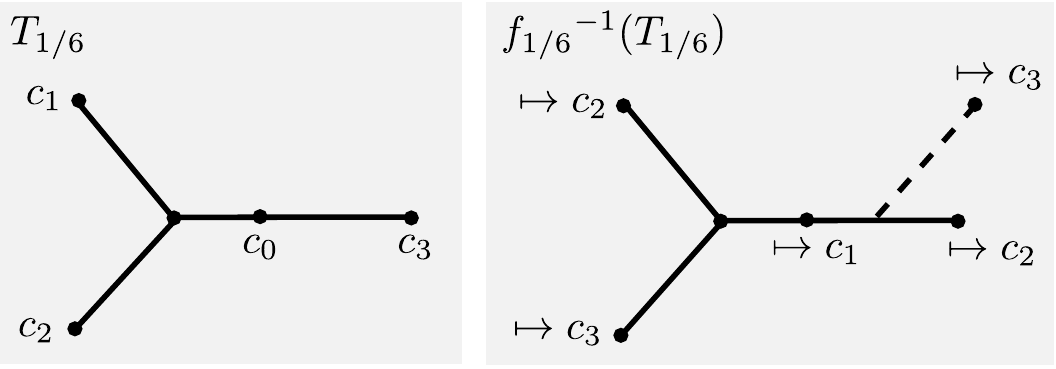}}
\caption{\label{hubbardpreim}{Preimages of a Hubbard tree under its associated polynomial.}}
\end{figure}

\section{An essential finite subdivision rule construction}\label{construction}

Recall that the emphasis for this paper is on the non-hyperbolic case in which two postcritically finite polynomials with dendritic Julia sets are mated. If we further restrict our work to the setting where the critical values of these polynomials are not in complex conjugate bulbs of the Mandelbrot set, the topological mating is Thurston-equivalent to a rational map on the Riemann sphere.  In order to understand how the quotient space for the mating comes together, we will construct a combinatorial model of the mating  in the form of a finite subdivision rule.

\subsection{The essential construction.}

An ideal finite subdivision rule should be based upon a subdivision map that is dynamically similar to the topological and geometric matings.  The formal mating will not always suffice: if any postcritical points of $F$ are contained in the same equivalence class of $\sim_t$, $F$ is not Thurston-equivalent to the topological mating. On the other hand, the essential mating \emph{does} give Thurston-equivalence to the topological and geometric matings---thus, it is a desirable subdivision map.

This leaves us to determine the tiling and subdivided tiling for a given mating. The Hubbard trees associated with the polynomial pair for our mating are a good start for a tiling 1-skeleton, as they record much of the dynamic information associated with the polynomials. However, there are two trees associated with any mating, and we need to reconcile this structure on $\mathbb{S}^2/\sim_e$. For many polynomial pairs, this problem solves itself quite naturally:

\begin{definition}[Finite subdivision rule construction, essential type] Let $f_\alpha$ and $f_\beta$ be critically preperiodic monic quadratic polynomials such that $x\sim_e y$ for some points $x\in T_\alpha$, $y\in T_\beta$.

Give $T_\alpha \bigsqcup T_\beta/\sim_e$ a graph structure on the quotient space of the essential mating by marking all postcritical points and branched points as vertices. (If need be, mark additional periodic or preperiodic points on $T_\alpha$ or $T_\beta$ and the points on their forward orbits to avoid tiles forming digons.) The associated 2-dimensional $CW$ complex for this structure will yield the subdivision complex, $S_\mathcal{R}$.

Select a construction of the essential mating $E$ and set $\mathcal{R}(S_\mathcal{R})$ to be the preimage of $S_\mathcal{R}$ under $E$, taking preimages of marked points of  $S_\mathcal{R}$ to be marked points of $\mathcal{R}(S_\mathcal{R})$ .

If $\mathcal{R}(S_\mathcal{R})$ is a subdivision of $S_\mathcal{R}$ and if the essential mating $E:\mathcal{R}(S_\mathcal{R})\rightarrow S_\mathcal{R}$ is a subdivision map, then $\mathcal{R}$ is a finite subdivision rule and the above construction is labelled of \emph{essential type}.
\end{definition}

The central idea behind this approach is that groupings of points on the critical orbit of $F$ which are identified under $\sim_e$ must be collapsed if we wish to use the essential mating as a subdivision map. The quotient of $T_\alpha\bigsqcup T_\beta$ under $\sim_e$ is a connected graph when $\sim_e$ is associated with a nontrivial essential mating, as in the example in Figure \ref{fig:f16f16i}. If we ``fill in" the open spaces of this graph with polygonal tiles, we obtain a subdivision complex $S_\mathcal{R}$ which in many cases subdivides when we consider its pullback by $E$.  We formalize these notions with the following theorem:

\begin{figure}[htb]
\center{\includegraphics{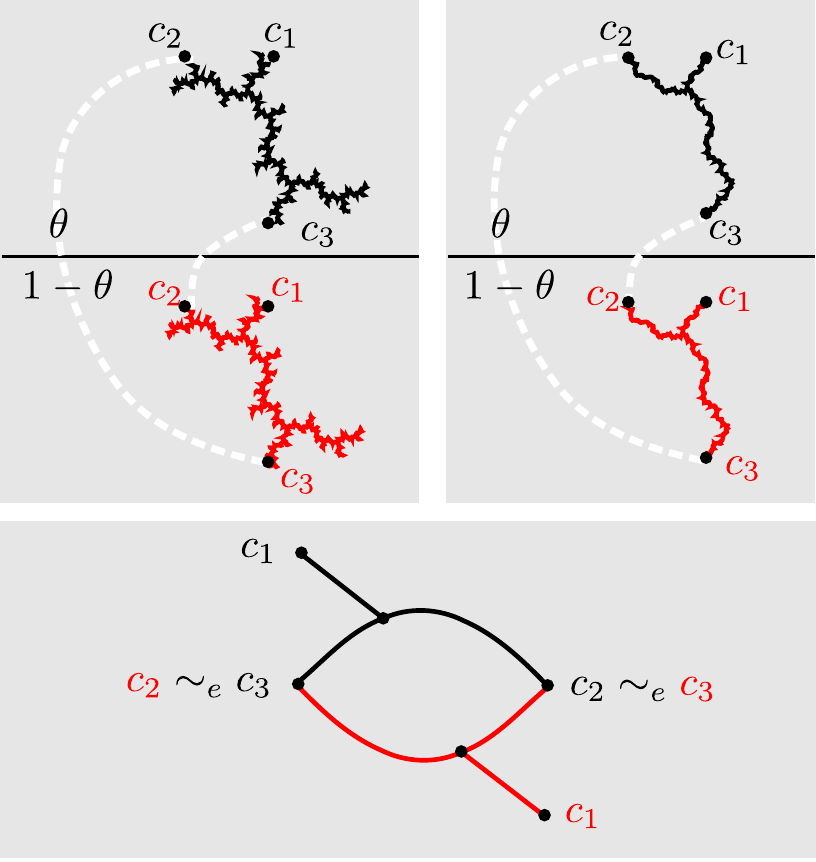}}
\caption[The rays shown here collapse under $\sim_e$]{\label{fig:f16f16i}{External ray-pairs which connect the periodic postcritical points of $f_{1/6}\upmodels_ff_{1/6}$ also modeled on Hubbard trees. The rays shown here collapse under $\sim_e$.}}
\end{figure}

\begin{theorem}\label{thm:construc1}
Let $F$ be the formal mating of $f_\alpha$ and $f_\beta$. The essential type construction fails to yield a finite subdivision rule generated by this polynomial pairing if and only if there exists some $x,y$ in $T_\alpha\bigsqcup T_\beta$ with $x\sim_t y$, $x\not\sim_e y$, and $F(x)\sim_e F(y)$.
\end{theorem}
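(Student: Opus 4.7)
The plan is to compare the 1-skeleton of $S_\mathcal{R}$, namely $\pi(T)$ with $T := T_\alpha \sqcup T_\beta$ and $\pi : \mathbb{S}^2 \to \mathbb{S}'^2$ the projection, with the 1-skeleton of $\mathcal{R}(S_\mathcal{R}) = E^{-1}(S_\mathcal{R})$, namely $E^{-1}(\pi(T))$. Forward invariance of each Hubbard tree under its own polynomial, together with $\sim_e$-equivariance of $E$, always yields $E(\pi(T)) \subseteq \pi(T)$, so the containment $\pi(T) \subseteq E^{-1}(\pi(T))$ is automatic in every case. The genuine question is whether the pullback introduces arcs outside $\pi(T)$ that interfere with the subdivision structure or with $E$ being a subdivision map.

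For the $(\Leftarrow)$ direction, suppose such $x, y$ exist, without loss of generality with $x \in T_\alpha$ and $y \in T_\beta$. Then $[F(x)]_e = [F(y)]_e$ is an identification vertex of $S_\mathcal{R}$ whose local 1-skeleton in $\pi(T)$ carries both $T_\alpha$-edges (at $F(x)$) and $T_\beta$-edges (at $F(y)$), while the distinct $E$-preimages $[x]_e, [y]_e$ are singleton $\sim_e$-classes with local $\pi(T)$-edges from only one of $T_\alpha, T_\beta$. Pulling the full local structure at $[F(x)]_e$ back through the degree-two branched cover $E$ thereby forces arcs at $[x]_e$ (namely, pullbacks of the $T_\beta$-edges at $F(y)$) that lie in $E^{-1}(\pi(T)) \setminus \pi(T)$ and emanate into open 2-cells of $S_\mathcal{R}$. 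I would then argue that these extra arcs produce cells of $\mathcal{R}(S_\mathcal{R})$ which either violate the polygon requirement (fewer than three vertices on their boundaries) or force $E$ to fail as a homeomorphism on one of the resulting open cells, so the construction does not yield a finite subdivision rule.

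For the $(\Rightarrow)$ direction, I would argue the contrapositive: if the construction fails, then at some identification vertex $[v]_e$ of $S_\mathcal{R}$ one of the $E$-preimages $[u]_e$ must be a singleton $\sim_e$-class on one side of $T$ while the local pullback demands edges from both sides, producing the extra-arc obstruction above. Since $\sim_t$ identifies landing points of opposite external ray pairs, and since a ray pair whose $F$-image lies in a $\tau_j$ pulls back to a ray pair landing at a $\sim_t$-identified pair, one can then extract a partner $u^{*} \in T$ on the opposite Hubbard tree satisfying $u \sim_t u^{*}$, $u \not\sim_e u^{*}$, and $F(u) \sim_e F(u^{*})$, which is the forbidden configuration.

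The principal technical obstacle lies in the $(\Leftarrow)$ step, where one must verify that the extra pullback arcs genuinely break the FSR axioms rather than being absorbed into a valid finer subdivision. This requires a careful local analysis of $E$ near $[x]_e$ --- keeping in mind the flexibility in defining $E$ on the neighborhoods $\pi(U_{ij})$ in Definition~\ref{thm:essential} --- to show that no choice of modification can avoid the appearance of new arcs outside $\pi(T)$, together with a combinatorial argument that the resulting cell structure at $[x]_e$ is incompatible with the finite-subdivision-rule requirements.
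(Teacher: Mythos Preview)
Your opening claim that $E(\pi(T))\subseteq\pi(T)$ holds ``in every case'' is where the argument goes wrong, and it propagates into both directions. Recall from Definition~\ref{thm:essential} that on each $\pi(U_{ij})$ the map $E$ is \emph{not} $\pi\circ F\circ\pi^{-1}$ but an arbitrary homeomorphism onto $\pi(V_j)$. Forward invariance of the Hubbard trees under $F$ therefore does not automatically transfer to forward invariance of $\pi(T)$ under $E$.

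For the $(\Leftarrow)$ direction this is precisely the obstruction, and you have it backwards. When $x\sim_t y$, $x\not\sim_e y$, and $F(x)\sim_e F(y)$, the points $[x]_e$ and $[y]_e$ both lie in a single $\pi(U_{ij})$ (they are joined by the ray graph witnessing $x\sim_t y$), while $[F(x)]_e=[F(y)]_e$ is a single point of $\pi(V_j)$. Since $E$ is a \emph{homeomorphism} $\pi(U_{ij})\to\pi(V_j)$, only one point of $\pi(U_{ij})$ maps to $[F(x)]_e$---so $[x]_e$ and $[y]_e$ cannot both be $E$-preimages of that vertex, contrary to what you assume. The pullback $E^{-1}(\pi(T))$ intersected with $\pi(U_{ij})$ is therefore a single connected piece, whereas $\pi(T)\cap\pi(U_{ij})$ has two pieces (through $[x]_e$ and $[y]_e$). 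The failure is that $E^{-1}(\pi(T))$ does \emph{not contain} $\pi(T)$ near one of these points; it is not that extra arcs appear. Extra arcs emanating into open 2-cells are exactly what a subdivision is supposed to produce, so that alone never breaks the FSR axioms.

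For the $(\Rightarrow)$ direction, the paper does argue the contrapositive, but the content is a construction you have not supplied: under the hypothesis that no such $x,y$ exist, every $U_{ij}$ meets at most one of $T_\alpha,T_\beta$, and this is what permits one to \emph{choose} the homeomorphism $E|_{\pi(U_{ij})}$ so that it agrees with $\pi\circ F\circ\pi^{-1}$ on $\pi(T)$. Only with this specific choice does forward invariance of $\pi(T)$ under $E$ follow, and hence $\pi(T)\subseteq E^{-1}(\pi(T))$. Your sketch assumes this inclusion from the start and so misses the step where the hypothesis is actually used.
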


\begin{proof}

We prove the backward direction by contradiction. Using the notation developed in Definition \ref{thm:essential} for the essential mating, if such an $x$ and $y$ exist, we must have that $x,y \in U_{ij}$ with $F(x),F(y)\in V_j$ for  some $i,j$. Recall that the essential mating $E$ is then a homeomorphism from $\pi(U_{ij})$ to $\pi(V_j)$. 

Since $F(x)\sim_eF(y)$, we can choose $V_j$ so that it contains no other marked points of our 1-skeleton, and so that $\pi(V_j)$ intersected with the 1-skeleton of $S_\mathcal{R}$ yields a connected subset of $\mathbb{S}^2$. $E$ being a homeomorphism then implies that the 1-skeleton of $E^{-1}(S_\mathcal{R})=\mathcal{R}(S_\mathcal{R})$ intersected with $\pi(U_{ij})$ is connected. $\pi(U_{ij})$ intersected with the 1-skeleton of $S_\mathcal{R}$, however will not be connected since $x\not\sim_ey$. This suggests that at least one edge must have been added to the 1-skeleton of $\mathcal{R}(S_\mathcal{R})$ in this neighborhood during a subdivision of $S_\mathcal{R}$. Thus, the intersection of $U_{ij}$ with $\mathcal{R}(S_\mathcal{R})$ should have at least two marked points corresponding to the endpoints of this edge (and potentially others) added during the subdivision of $S_\mathcal{R}$. This cannot be so, however, since by the construction this intersection should contain only the single marked point $E^{-1}\circ\pi\circ F(x)$. Thus, the construction does not yield a finite subdivision rule in this case.

We now prove the forward direction by contrapositive: suppose that there exist no $x,y$ in $T_\alpha\bigsqcup T_\beta$ with $x\sim_ty$, $x\not\sim_e y$, and $F(x)\sim_eF(y)$. Then for every $U_{ij}$, at least one of $U_{ij}\cap T_\alpha$ or $U_{ij}\cap T_\beta$ must be $\varnothing$. We will now use $E$ to denote the essential mating formed with the additional restrictions that $E|_{U_{ij}\cap\pi(T_\alpha\bigsqcup T_\beta)}=\pi F \pi^{-1}$, and that $E$ be a homeomorphism that extends continuously to this new boundary on the remainder of the $\pi(U_{ij})$. This agrees with the definition of $E$ off $\displaystyle\bigcup_{i,j}U_{ij}$, and still permits $E$ to be a homeomorphism from each $U_{ij}$ to its respective $V_j$---that is, we still have that $E$ is an essential mating as defined before; we are just being more specific regarding the homeomorphism used in the final step of its construction.

We will consider the essential type construction performed with this essential mating, $E$, and show that it yields a finite subdivision rule. Recall that we need three things for a finite subdivision rule: a tiling, a subdivided tiling, and a subdivision map.

For the tiling $S_\mathcal{R}$, note that ``filling in" the open spaces of a finite, connected, planar graph with open 2-cell tiles guarantees a 2 dimensional CW complex. The 1-skeleton of our tiling starts with two disjoint Hubbard trees, which on their own would be finite and planar, but disconnected. The construction requires that the essential mating is nontrivial with postcritical identifications between trees on $\mathbb{S}^2/\sim_e$ though, so the 1-skeleton is connected and we obtain the desired CW complex. The final requirements for a tiling forbid monogon and digon tiles, but the construction expressly accounts for this by requiring additional marked points to fix potentially errant tiles.


For the subdivision map, we need to show that $E$ restricted to any open cell of $\mathcal{R}(S_\mathcal{R})$ maps homeomorphically onto some open cell of $S_\mathcal{R}$. Since $\mathcal{R}(S_\mathcal{R})$ is obtained by pulling back the structure of $S_\mathcal{R}$ under $E$, this follows from the fact that the critical and postcritical set of $E$ are marked as vertices in $S_\mathcal{R}$. Marked points of $\mathcal{R}(S_\mathcal{R})$ must map to marked points of $S_\mathcal{R}$, and since $E$ is a branched covering it must map homeomorphically on the remaining open tiles and edges.

This leaves checking that the tiling $\mathcal{R}(S_\mathcal{R})$ is a tiling which is a subdivision of $S_\mathcal{R}$. Again, as  $\mathcal{R}(S_\mathcal{R})$ is obtained by pulling back the structure of $S_\mathcal{R}$ under $E$, it will yield a tiling---but it is not obvious that this tiling results from a subdivision of $S_\mathcal{R}$. We will need to check that the open tiles and edges of $\mathcal{R}(S_\mathcal{R})$ resemble open tiles and edges of $\mathcal{R}(S_\mathcal{R})$ which have been subdivided by open edges and vertices. We will obtain this condition if the 1-skeleton of $\mathcal{R}(S_\mathcal{R})$ contains a subdivision of the 1-skeleton of $S_\mathcal{R}$. This will be true if the 1-skeleton of $S_\mathcal{R}$ is forward invariant under $E$.

By the essential construction, note that the 1-skeleton of $S_\mathcal{R}$ is given by points in $\pi(T_\alpha\bigsqcup T_\beta)$. The definition of our essential mating $E$, however, yields that $E|_{\pi(T_\alpha\bigsqcup T_\beta)}=\pi \circ F\circ  \pi^{-1}$. Thus, $E$ maps our 1-skeleton to $\pi\circ F (T_\alpha\bigsqcup T_\beta)$. Recall that the formal mating $F$ acts as $f_\alpha$ on $T_\alpha$ and as $f_\beta$ on $T_\beta$, though. Since Hubbard trees are forward invariant under their associated polynomials, $F$ preserves $T_\alpha\bigsqcup T_\beta$, and so our 1-skeleton is mapped to itself under $E$.

Since we have shown that $E$ acts as a subdivision map from the subdivided tiling $\mathcal{R}(S_\mathcal{R})$ to the tiling $S_\mathcal{R}$, the essential type construction yields a finite subdivision rule.
 \end{proof}

In simpler words, Theorem \ref{thm:construc1} tells us that we will have a problem building a finite subdivision rule using the essential type construction.only whenever two points are identified by $\sim_e$, but their preimages are not. 

\subsection{An example} 
To highlight a case where the essential construction yields a finite subdivision rule, we consider the mating $f_{1/6}\upmodels_e f_{1/6}$. The essential construction prescribes that we start with the disjoint union of Hubbard trees of the two constituent polynomials in the mating, $T_{1/6}$ and $T_{1/6}$, and then take a quotient under the relation $\sim_e$ associated with this mating. The Hubbard tree is presented on the left of Figure \ref{fig:f16f16con1i}, and $T_{1/6}\bigsqcup T_{1/6}/\sim_e$ is shown on the right. (Recall that a pair of external rays adjacent to the same spot on the equator of $\mathbb{S}^2$ will land at $\theta$ and $1-\theta$ on opposing Julia sets in the formal mating. Thus, if there is a $\theta$ and $1-\theta$ pairing of postcritical points on opposing trees, these points collapse under $\sim_e$.) The resulting 1-skeleton yields a two-tile subdivision complex $S_\mathcal{R}$.

\begin{figure}[htb]
\center{\includegraphics{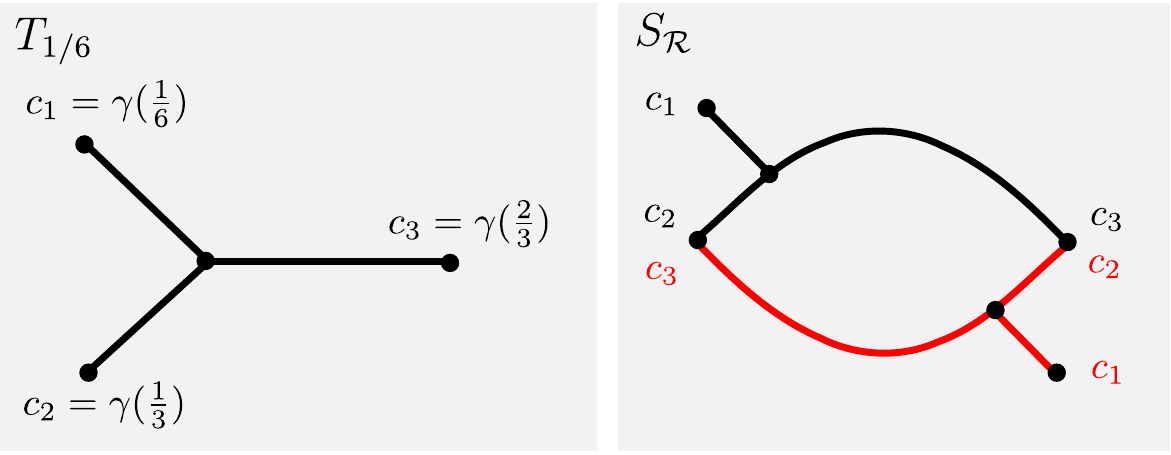}}
\caption{\label{fig:f16f16con1i}{The Hubbard tree for $f_{1/6}$, and the 1-skeleton of the essential type subdivision complex, $S_\mathcal{R}$, for $f_{1/6}\upmodels_ef_{1/6}$}}
\end{figure} 

We now need to take the pullback of $S_\mathcal{R}$ under $E$ to obtain the subdivided complex $\mathcal{R}(S_\mathcal{R})$. It may not be immediately obvious how to determine what the resulting 1-skeleton looks like, but the Hubbard tree structure is helpful here: the preimage of a Hubbard tree under its associated polynomial yields two miniature copies of the tree which map homeomorphically onto the original tree, joined at the critical point. This suggests ``missing limbs" that when filled in will subdivide the tiles of $S_\mathcal{R}$. Noting where each of the marked points maps forward shows where to embed these limbs, since the 1-skeleton of $\mathcal{R}(S_\mathcal{R})$ should map homeomorphically onto the 1-skeleton of $S_\mathcal{R}$ off of the critical point. This yields $\mathcal{R}(S_\mathcal{R})$, as shown in the right side of Figure \ref{fig:f16f16con1ii}. 

\begin{figure}[htb]
\center{\includegraphics{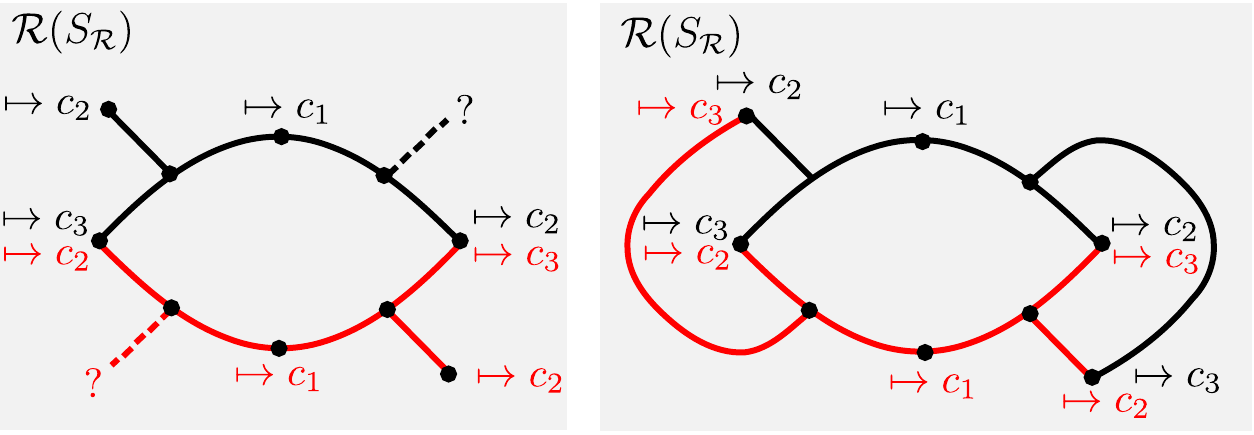}}
\caption{\label{fig:f16f16con1ii}{Determining the essential type subdivided complex, $\mathcal{R}(S_\mathcal{R})$}}
\end{figure} 

An important thing to note in the above example is that we can obtain up to the first  subdivision utilizing the given essential mating map, but that subsequent pullbacks by $E$ do not subdivide in the manner suggested by the original tiles. After the first subdivision we exhaust all of the equivalence classes that collapse to form the quotient space for the essential mating, meaning that the essential mating is not actually a subdivision map for these later iterations. This is precisely the problem that we want to avoid in developing a setting for the essential type construction to admit a finite subdivision rule.

Recall that finite subdivision rules do not require embedded structures or maps to yield a rule, though---combinatorially defined rules are acceptable. In this case, we can use the combinatorial rule implied by the essential construction after the first iteration. Figure \ref{multiplesubs} shows this for the $f_{1/6}\upmodels_ef_{1/6}$ example mentioned above; note how the essential construction yields a two-tile subdivision rule with a quadrilateral and an octagon. When subdividing, the quadrilateral is replaced with an octagon, and the octagon is subdivided into two quadrilaterals and a smaller octagon. This pattern continues for future subdivisions.

\begin{figure}[htb]
\center{\includegraphics{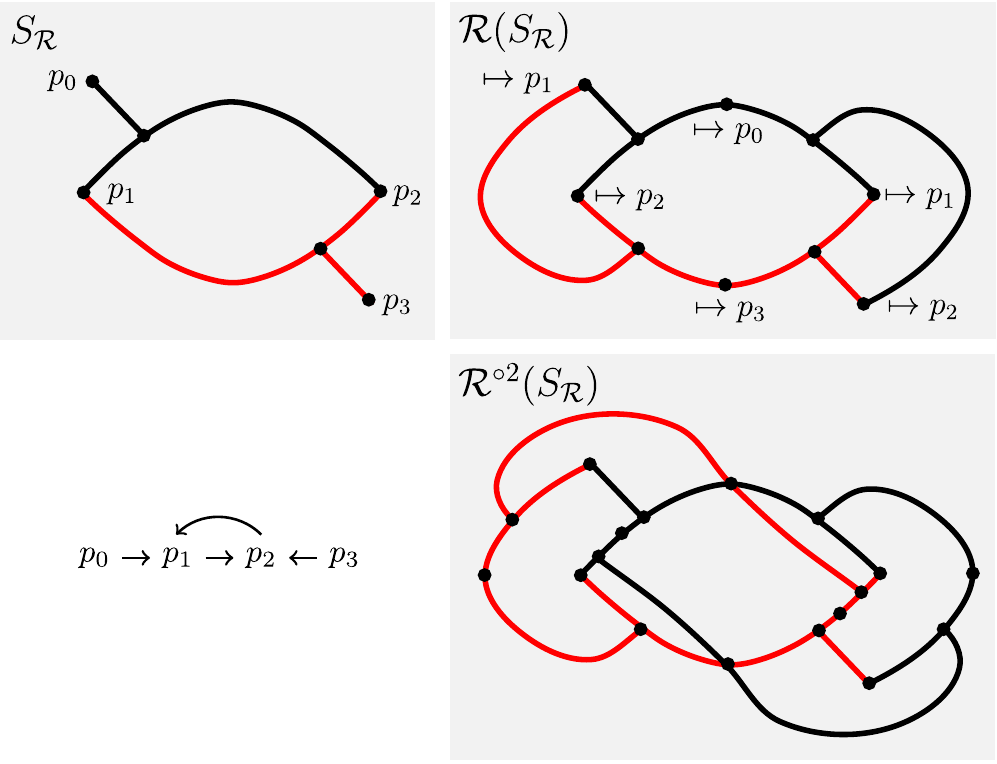}}
\caption{\label{multiplesubs}{Subsequent subdivisions of of $S_\mathcal{R}$ for the mating $f_{1/6}\upmodels_ef_{1/6}$.}}
\end{figure} 

While this subdivision rule will not reflect the behavior of the essential mating after the first subdivision (the subsequent subdivisions would suggest an infinite number of nontrivial equivalence classes of $\sim_e$ as we keep subdividing, which is impossible), it does show us identifications made in the topological mating. Any time the opposing Hubbard tree structures meet reflects some equivalence class of $\sim_t$ collapsing to a point.


\subsection{A non-example} To highlight a less trivial situation in which the essential construction does \emph{not} yield a finite subdivision rule, we will consider the example $f_{7/8}\upmodels_ef_{1/4}$. In Figure \ref{problemex1}, we see the two Hubbard trees needed for the construction with postcritical points and branched points marked, along with the subdivision complex $S_\mathcal{R}$ associated with the essential construction for this mating. For ease of notation in the figures, we set $\gamma(\theta):=\gamma_{7/8}(\theta)$, and $\gamma(\theta)^*:=\gamma_{1/4}(1-\theta)$. When building $S_\mathcal{R}$, it will help to recall that this implies $\gamma(\theta)\sim_e\gamma(\theta)^*$.

\begin{figure}[htb]
\center{\includegraphics{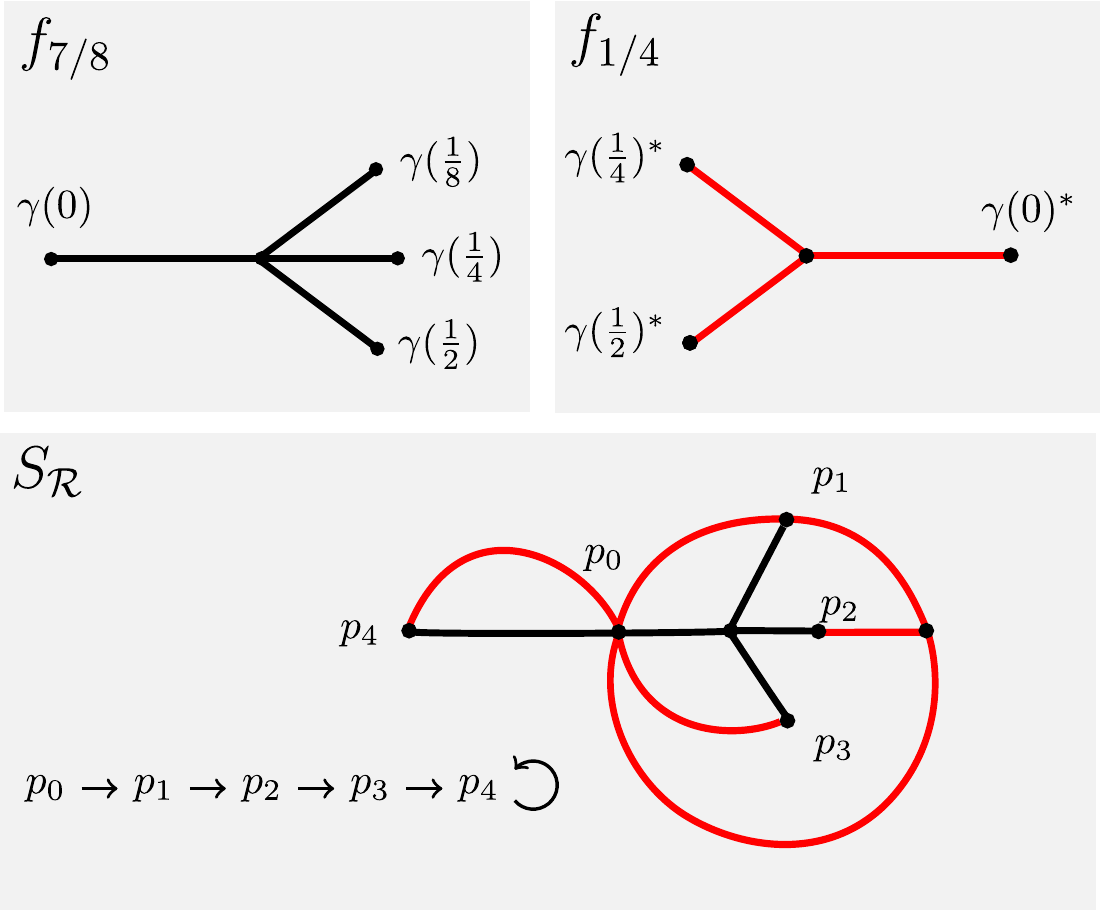}}
\caption{\label{problemex1}{Hubbard trees for $f_{7/8}$ and $f_{1/4}$, along with $S_\mathcal{R}$ as suggested by the essential construction for $f_{7/8}\upmodels_ef_{1/4}$.}}
\end{figure}

The critical portrait for this essential mating suggests a subdivision similar to that given in Figure \ref{problemex2}: first, we note where each of the marked points will map; and second, since we expect that the rule reflects a degree two map we should subdivide 1- and 2-cells as needed to yield a homeomorphic mapping onto $S_\mathcal{R}$. This forces the addition of 4 new edges and 4 new vertices to our structure---but regardless of their placement, no subdivision will have $f_{7/8}\upmodels_ef_{1/4}$ serve as the subdivision map for a subdivision rule. The grey regions highlighted in Figure \ref{problemex2} contain points on the initial Hubbard trees which identify under $\sim_t$ but not $\sim_e$, and whose forward images identify under $\sim_e$. There are two ways to view why this is problematic: first, subdivisions of the initial tiling will not map locally homeomorphically onto $S_\mathcal{R}$ off of the critical points, thus any finite subdivision rule with subdivision complex $S_\mathcal{R}$ cannot have the essential mating as a subdivision map. Alternatively, pullbacks of $S_\mathcal{R}$ under the essential mating are not proper subdivisions. Instead, they possess 1-skeletons that appear to be ``pinched" versions of subdivided 1-skeletons.
 
\begin{figure}[htb]
\center{\includegraphics{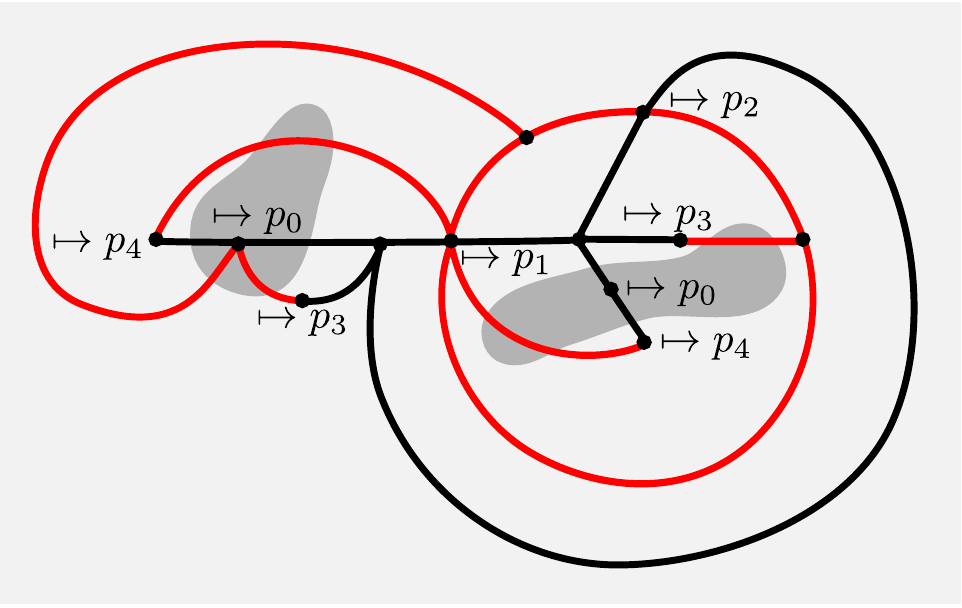}}
\caption{\label{problemex2}{A subdivision of $S_\mathcal{R}$ from Figure \ref{problemex1} that does not map homeomorphically onto $S_\mathcal{R}$.}}
\end{figure} 

Experimentally, the essential construction appears most likely to falter with polynomial pairings like $f_{7/8}$ and $f_{1/4}$ where some equivalence class of $\sim_e$ contains two points from the same Hubbard tree. This is not to say that these kinds of matings cannot be expressed by finite subdivision rules, however. In many cases, minor adaptations can be made to the essential construction in order to produce a rule. One such adaptation is presented in Figure \ref{alternatecon}: since the full critical orbit of $f_{7/8}\upmodels_ef_{1/4}$ is contained in $T_{7/8}/\sim_e$, we can use this as the 1-skeleton for a subdivision complex rather than $T_{7/8}\bigsqcup T_{1/4}/\sim_e$. The proof of Theorem \ref{thm:construc1} implies that if a 1-skeleton is finite, connected, planar, forward invariant, and contains the postcritical set as vertices, then filling in the 1-skeleton with tiles will yield a finite subdivision rule. The subdivision complex in this modified finite subdivision rule is then a $10$-gon which is subdivided into two $10$-gons when pulled back by the essential mating $f_{7/8}\upmodels_ef_{1/4}$.

\begin{figure}[htb]
\center{\includegraphics{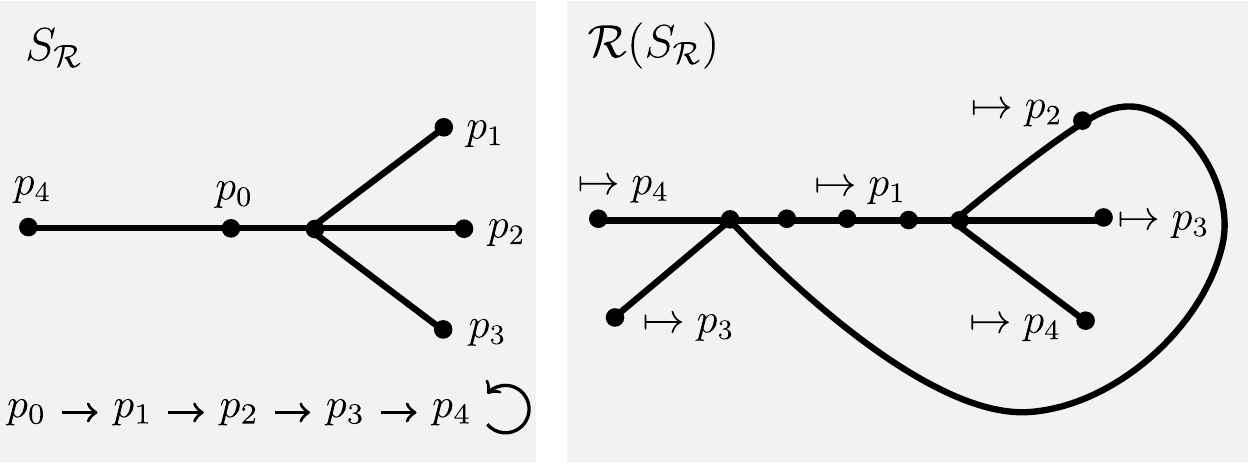}}
\caption{\label{alternatecon}{A finite subdivision rule with subdivision map given by the essential mating $f_{7/8}\upmodels_ef_{1/4}$.}}
\end{figure}


\section{The essential construction and the pseudo-equator}
\label{connections}

In a sense, the essential construction shows us where the ``most important" identifications in a mating are formed first, since we start with the essential mating and then are shown where subsequent preimage identifications must be made on polynomial Julia sets.

This section elaborates on how this technique can provide insights into other means for visualizing and understanding matings.


\subsection{Meyer's pseudocircles} 

In \cite{MATINGEXAMPLES}, Meyer shows that certain postcritically finite rational maps can be viewed as matings and then decomposed into their two constituent polynomials.  If the Julia set of the rational map is a two-sphere, a sufficient condition for such a decomposition is the existence of a \emph{pseudo-equator}:

\begin{definition}\label{pseudoequator}

A homotopy $H: X\times [0,1]\rightarrow X$ is a \emph{pseudo-isotopy} if $H: X\times [0,1)\rightarrow X$ is an isotopy. We will assume $H_0=H(x,0)=x$ for all $x\in X$.

Let $f$ be a postcritically finite rational map, $\mathcal{C}\subseteq \hat{\mathbb{C}}$ be a Jordan curve with $P_f\subseteq\mathcal{C}$, and $\mathcal{C}^1=f^{-1}(\mathcal{C})$. Then we say that $f$ has a \emph{pseudo-equator} if it has a pseudo-isotopy $H: \mathbb{S}^2\times[0,1]$ rel. $P_f$ with the following properties:

\begin{enumerate}
\item $H_1(\mathcal{C})=\mathcal{C}^1$.
\item The set of points $w\in \mathcal{C}$ such that $H_1(w)\in f^{-1}(P_f)$ is finite. (We will let $W$ denote the set of all such $w$.)
\item $H_1:\mathcal{C}\backslash W\rightarrow \mathcal{C}^1\backslash f^{-1} (P_f)$ is a homeomorphism.
\item $H$ deforms $\mathcal{C}$ orientation-preserving to $\mathcal{C}^1$.
\end{enumerate}

\end{definition}

The motivation for the pseudo-equator definition appears forced when approached from the starting point of a rational map, but is quite natural when starting with the mating:

\begin{theorem}\label{secondthm}Let $\mathbb{S}'^2$ denote the quotient space associated with the mating $E=f_\alpha\upmodels_ef_\beta$, and let $P_E$ denote the postcritical set of $E$. If there exists some Jordan curve $\mathcal{C}$  on $X$ which contains $P_E$ and separates $(T_\alpha/\sim_e) \backslash P_E$ from $(T_\beta/\sim_e) \backslash P_E$, then $E$ has a pseudo-equator.\end{theorem}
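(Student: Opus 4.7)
The plan is to take the given curve $\mathcal{C}$, analyze its preimage $\mathcal{C}^1 = E^{-1}(\mathcal{C})$ under the degree-$2$ branched covering $E$, and then build an explicit pseudo-isotopy $H$ realizing the four pseudo-equator conditions.

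First I would describe $\mathcal{C}^1$. Since each critical value of $E$ lies in $P_E \subseteq \mathcal{C}$, the graph $\mathcal{C}^1$ consists of two arcs crossing transversally at each critical point of $E$, together with an unbranched $2$-fold cover of $\mathcal{C}$ elsewhere. The forward invariance $E(P_E) \subseteq P_E$ yields $P_E \subseteq E^{-1}(P_E) \subseteq \mathcal{C}^1$, and Riemann--Hurwitz pins down the topology: in the typical case of two distinct critical points, $\mathcal{C}^1$ has two vertices of valence $4$ (the critical points) while the remaining vertices of $E^{-1}(P_E)$ have valence $2$, with edges pairing up via the double cover.

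Next I would use the separation hypothesis to place $\mathcal{C}^1$ in the sphere. The Jordan curve $\mathcal{C}$ bounds two open disks $D_\alpha, D_\beta$ whose closures contain $T_\alpha/\sim_e$ and $T_\beta/\sim_e$ respectively, meeting $\mathcal{C}$ only along $P_E$. Because $T_\alpha \cup T_\beta$ (modulo $\sim_e$) is forward invariant under $E$ (a property used centrally in the proof of Theorem \ref{thm:construc1}), the preimages $E^{-1}(D_\alpha)$ and $E^{-1}(D_\beta)$ contain the appropriate tree preimages on the appropriate sides. This places $\mathcal{C}^1$ in $\mathbb{S}'^2$ as a pinched version of $\mathcal{C}$: the only obstruction to $\mathcal{C}^1$ being honestly isotopic rel $P_E$ to $\mathcal{C}$ is the identification of pairs of points at the critical points of $E$.

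The third step is to construct $H_1: \mathcal{C} \to \mathcal{C}^1$ and extend to a pseudo-isotopy $H_t$. I would place finitely many auxiliary marked points on $\mathcal{C}$ so that the resulting set $W \supseteq P_E$ partitions $\mathcal{C}$ into the same number of open arcs as $\mathcal{C}^1 \setminus E^{-1}(P_E)$. Define $H_1$ as the identity on $P_E$; send a designated pair of auxiliary points of $W \setminus P_E$ to each critical point of $E$ (choosing the pair from the two arcs of $\mathcal{C}\setminus P_E$ adjacent to the corresponding critical value); send the remaining auxiliary points to the remaining vertices of $E^{-1}(P_E) \setminus P_E$; and map each arc of $\mathcal{C} \setminus W$ homeomorphically onto the corresponding arc of $\mathcal{C}^1 \setminus E^{-1}(P_E)$ following the cyclic order inherited from the ambient orientation. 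Extend $H_1$ to $\mathbb{S}'^2$ using the disk structure on both sides of $\mathcal{C}$, then pull it back via an Alexander-trick collar extension to obtain $H_t$ with $H_0 = \mathrm{id}$ and $H_t$ a homeomorphism for every $t < 1$.

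The main obstacle is the orientation-preservation condition (4), which requires the cyclic order on $\mathcal{C}$ (with, say, $D_\alpha$ on the left) to agree with the traversal order on $\mathcal{C}^1$ induced by the isotopy. The separation hypothesis is exactly what forces the loops of $\mathcal{C}^1$ to enclose $D_\alpha$ and $D_\beta$ in a manner compatible with the ambient orientation, so that the pinches at the critical points of $E$ can be realized by an orientation-preserving ambient isotopy rel $P_E$; without separation, the loops of $\mathcal{C}^1$ could wind around the Hubbard trees incompatibly and block such a pseudo-isotopy. The remaining three conditions hold by construction: $H_1(\mathcal{C}) = \mathcal{C}^1$ by design, $W$ is finite, and $H_1|_{\mathcal{C}\setminus W}$ is a homeomorphism onto $\mathcal{C}^1 \setminus E^{-1}(P_E)$.
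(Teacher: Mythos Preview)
Your proposal is essentially correct and follows the same overall strategy as the paper: analyze the preimage $\mathcal{C}^1=E^{-1}(\mathcal{C})$, then produce the pseudo-isotopy $H$.  The emphasis is distributed differently, though.  The paper spends most of its proof on the first step, arguing by an explicit case analysis (three candidate shapes for $\mathcal{C}^1$, two ruled out by counting complementary components and by a degree argument) that $\mathcal{C}^1$ must be the ``figure-eight'' obtained from $\mathcal{C}$ by pinching at the two critical points; it then leaves the construction of $H$ to the reader with only a brief hint about slicing along the Hubbard trees.  You instead invoke Riemann--Hurwitz together with the separation hypothesis to reach the same conclusion, and then give a much more explicit recipe for $H_1$ and its extension.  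Either route works; the paper's case analysis is more self-contained about why the other global configurations of $\mathcal{C}^1$ are impossible, while your argument makes the role of the separation hypothesis in securing condition~(4) clearer.

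One small gap worth patching: your first step records the local structure of $\mathcal{C}^1$ (two valence-$4$ vertices, valence-$2$ elsewhere) but does not by itself rule out the ``barbell'' or ``four parallel edges'' configurations for the underlying graph; you only assert in step~2 that $\mathcal{C}^1$ is a pinched copy of $\mathcal{C}$.  A sentence explaining why the complementary regions of $\mathcal{C}^1$ must number exactly four (e.g.\ because $E$ restricts to a homeomorphism on each component of $\mathbb{S}'^2\setminus\mathcal{C}^1$ and $\mathbb{S}'^2\setminus\mathcal{C}$ has two components) would close this.  Incidentally, your insistence on auxiliary marked points so that $W\supsetneq P_E$ is correct and in fact necessary: since $H$ is rel $P_E$, the points of $P_E$ are fixed by $H_1$ and therefore cannot be the points that get pinched to the critical points of $E$, which lie outside $P_E$ in the critically preperiodic setting.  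The paper's parenthetical remark that one should expect $W=P_E$ is too optimistic in general.
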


\begin{proof}
Consider the pullback of $\mathcal{C}$ under $E$, $\mathcal{C}^1$. Since $\mathcal{C}$ contains the critical values of $E$, $\mathcal{C}^1$ must pass through the two critical points of $E$. Locally, the pullback resembles an X at the critical points because $E$ is a degree 2 map---and these are the only locations that the pullback has this shape, since there are only two critical points. 

Since $E$ is a branched covering map, there are a limited number of options for the topological shape of the pullback $\mathcal{C}^1$ since $\mathcal{C}^1$ may only cross itself twice. The options resemble those given in Figure \ref{fig:curvecases}, up to inclusion of additional components that are Jordan curves.

\begin{figure}[htb]
\center{\includegraphics{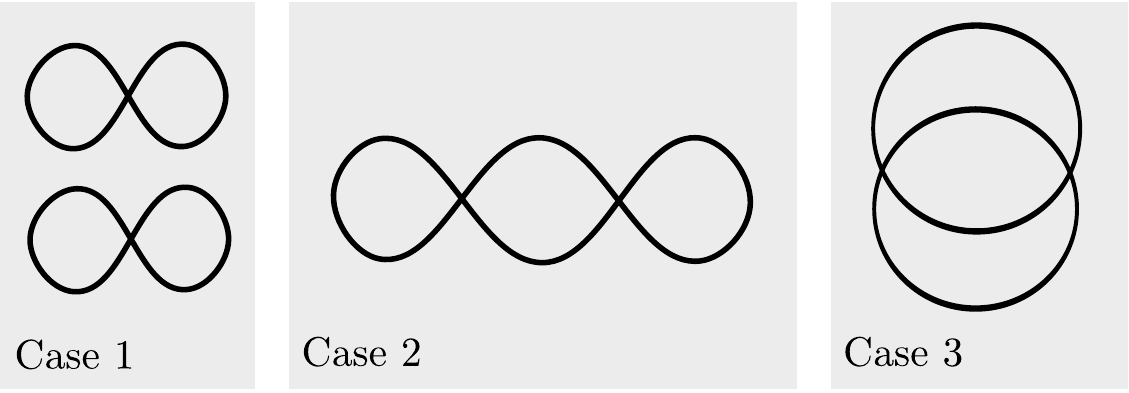}}
\caption{\label{fig:curvecases}{Possible pullbacks of $\mathcal{C}$ under a branched covering map.}}
\end{figure}

These are possibilities for a generic branched covering not specific to $E$, however. The first case in Figure \ref{fig:curvecases} cannot be the pullback because $\mathbb{S}'^2\backslash \mathcal{C}^1$ contains too many components: $E$ is a degree two map, and acts homeomorphically off the critical set. This means that we should expect $\mathbb{S}'^2\backslash\mathcal{C}^1$ to have 4 components. This line of reasoning also rules out the possibility of adjoining additional Jordan components to any of the cases in Figure \ref{fig:curvecases}.

The second case we can rule out using a similar line of reasoning: we can examine where segments of the pullback will map based on where the endpoints map. The segments on either end start and end at a critical point, which means the image of these segments under $E$ must start and end at a critical value. These end segments, when paired with their respective critical points, must map onto $\mathcal{C}$. The two segments in the middle when paired with the critical points must also map onto $\mathcal{C}$. This suggests that $E$ is at minimum a degree 3 map, which is not the case.

We are left with the pullback resembling the the last case of Figure \ref{fig:curvecases}. Since $E$ acts homeomorphically off of the critical set, we expect a mapping behavior much like that expressed in Figure \ref{pullbackcurve}. In this figure, blue lines denote the indicated curve and dots mark critical points. The bolded black and red lines mark Hubbard trees, with dashing to denote that we are only showing local behavior of the tree near the critical point. Notice that if we ``sliced" $\mathcal{C}^1$ along the Hubbard trees, we'd obtain a curve that could be deformed in an orientation preserving manner to $\mathcal{C}$. This deformation hints at the desired pseudo-isotopy $H$.

\begin{figure}[htb]
\center{\includegraphics{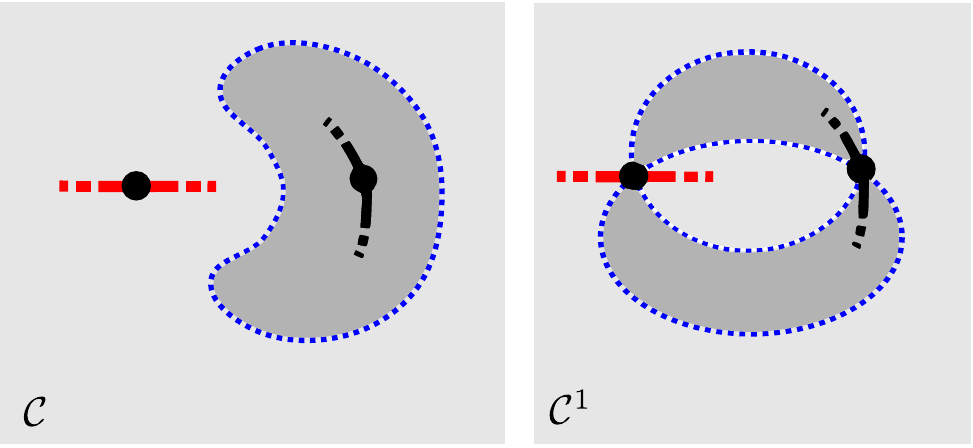}}
\caption{\label{pullbackcurve}{$\mathcal{C}$ and its pullback, shown with local behavior of Hubbard trees near the critical points of $E$.}}
\end{figure}

For brevity we leave the explicit construction of $H$ to the interested reader, but offer the following comments: $H$ should be constructed to avoid mapping arcs of $\mathcal{C}$ to single points of $\mathcal{C}^1$. Further, borrowing notation from the definition of pseudo equator above, we expect that $W=P_E$. This should guarantee conditions (2) and (3) in Definition \ref{pseudoequator}.

\end{proof}

\subsection{An example, continued}
Theorem \ref{secondthm} implies the following method for finding pseudo-equators associated with a mating: if $\Gamma$ is homotopic to the equator on $\mathbb{S}^2$ relative to $T_\alpha$ and $T_\beta$, then $\mathcal{C}=\Gamma/\sim_e$ generates a pseudo-equator when $\mathcal{C}$ is a Jordan curve. It is thus reasonably straightforward to visualize the pseudo-equator on particular matings by using the essential construction: form a finite subdivision rule using the essential construction, and on $S_\mathcal{R}$ draw a curve $C$ through the postcritical points such that $\mathbb{S}'^2 \backslash C$ contains two components---the closure of each containing the Hubbard tree of a polynomial in the mating. If $C$ is a Jordan curve, $C$ generates a pseudo-equator, and the subdivision map shows us how Meyer's two-tiling subdivides, as in Figure \ref{meyerex}.

\begin{figure}[htb]
\center{\includegraphics{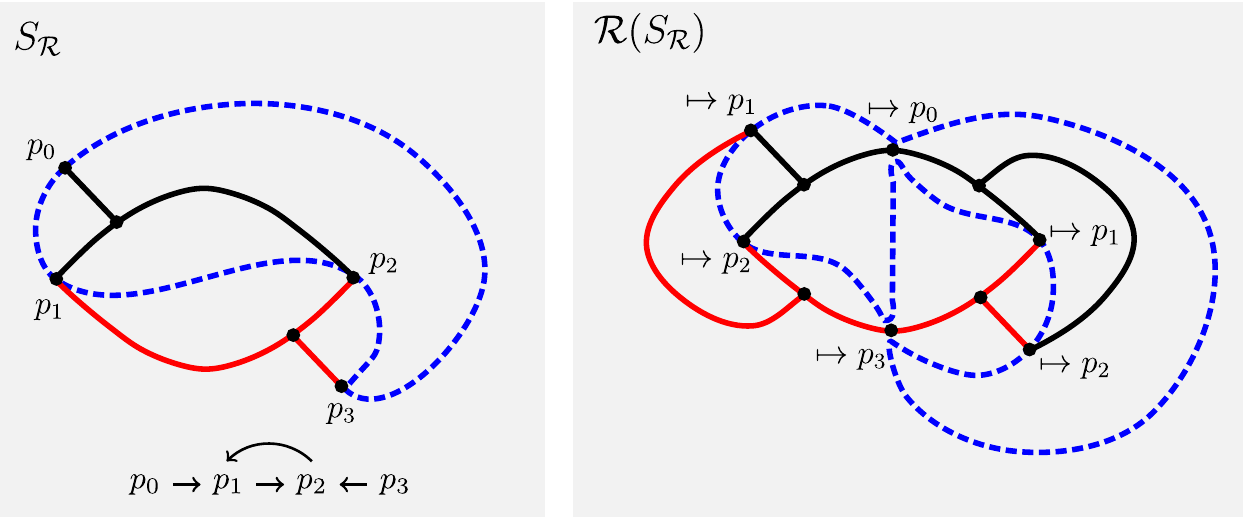}}
\caption{\label{meyerex}{The pseudo-equator associated with $f_{1/6}\upmodels_ef_{1/6}$. $C$ is marked in blue on the left. The pullback of $C$ under this mating is marked in blue on the right.}}
\end{figure}

With consideration for edge replacements in the pullback, the pseudo-equator provides a means for recovering information on the polynomial pair associated with the mating. Although it should be clear in this $f_{1/6}\upmodels_ef_{1/6}$ example that the polynomials associated with the pseudo-equator are two copies of $f_{1/6}$, we can confirm the decomposition for C using the methods given in \cite{MATINGEXAMPLES}.

First, label the postcritical vertices along the pseudo-equator as $p_0,...,p_n$. We then label each edge from $p_i$ to $p_{i+1 (\mod  n+1)}$ as $E_i$, and determine the edge replacement matrix $(a_{ij})$ of the pseudo isotopy where $a_{ij}$ is the number of distinct sub-edges of $H_1(E_i)$ which map to $E_j$ . The edge replacement matrix for the example in Figure \ref{meyerex} is

$$\left[\begin{array}{rrrr} 0&1&0&0\\
1&0&1&1\\
0&1&0&0\\
1&0&1&1\\
\end{array}\right].$$

The degree of the mating corresponds to the leading eigenvalue of the edge replacement matrix, which is 2. When normalized so that the sum of entries is 1, the corresponding eigenvector is $v=\left[\frac{1}{6}\ \ \frac{1}{3}\ \ \frac{1}{6}\ \ \frac{1}{3}\ \right]^T$. The entries $v_0, v_1,...$ of $v$ then correspond to the lengths of edges $E_0, E_1,...$ on the pseudo-equator, which in turn determines spacing of the marked postcritical points $p_i$. 

Since the spacing between these points does not immediately provide information about the mating, we let the function $\theta:\{p_0,...,p_n\}\rightarrow[0,1)$ denote the external angle associated with each postcritical point with respect to one of the polynomial Hubbard trees (say, the black one in Figure \ref{meyerex}). This function must satisfy two properties: first by tracking lengths of edges, that 

$$\theta(p_i)=\theta(p_0)+\displaystyle\sum_{k=1}^iv_k,$$ and second, we require that 

$$\theta(p_i)=\theta\circ E(p_i)-\theta(p_i) \pmod 1$$ due to the Carath\'{e}odory semiconjugacy associated with the mating. Simple computation allows us to obtain that for the current example, $\theta(p_0)=\frac{1}{6}, \theta(p_1)=\frac{1}{3},\theta(p_2)=\frac{2}{3},$ and $\theta(p_3)=\frac{5}{6}.$ The Carath\'{e}odory semiconjugacy suggests that $p_0$ and $p_3$ are our critical values. Since the external angle is given with respect to the black polynomial, this means that only one of $\theta(p_0)$ or $\theta(p_3)$ may be taken as the correctly oriented angle associated with this polynomial, and that the other is given in reverse orientation. If we choose $p_0$ to have a correctly oriented angle $\frac{1}{6}$, this means that $p_3$ has external angle when oriented to the red polynomial of $1-\theta(p_3)=\frac{1}{6}$. Thus, we confirm that the pseudo-equator is given by $f_{1/6}$ mated with itself. 

\subsection{When pseudo-equators do not exist}
Not all non-hyperbolic matings have pseudo-equators. A potential reason is that the path $C$ is not always a Jordan curve--any time $\sim_e$ contains equivalence classes that include multiple postcritical or critical points from one of the polynomials in the mating, the equator $\Gamma$ is pinched to form $C$. This falls outside of the scope of the definition for a pseudo equator, which concerns the deformation of a Jordan curve.  For instance, the example given in \cite{MATINGEXAMPLES} for $f_{1/6}\upmodels f_{13/14}$ presents with subdivision complex $S_\mathcal{R}$ and $C$ as shown in Figure \ref{meyerex3}. Notice the pinching of the blue equator curve due to the postcritical identifications on $f_{13/14}$.

\begin{figure}[htb]
\center{\includegraphics{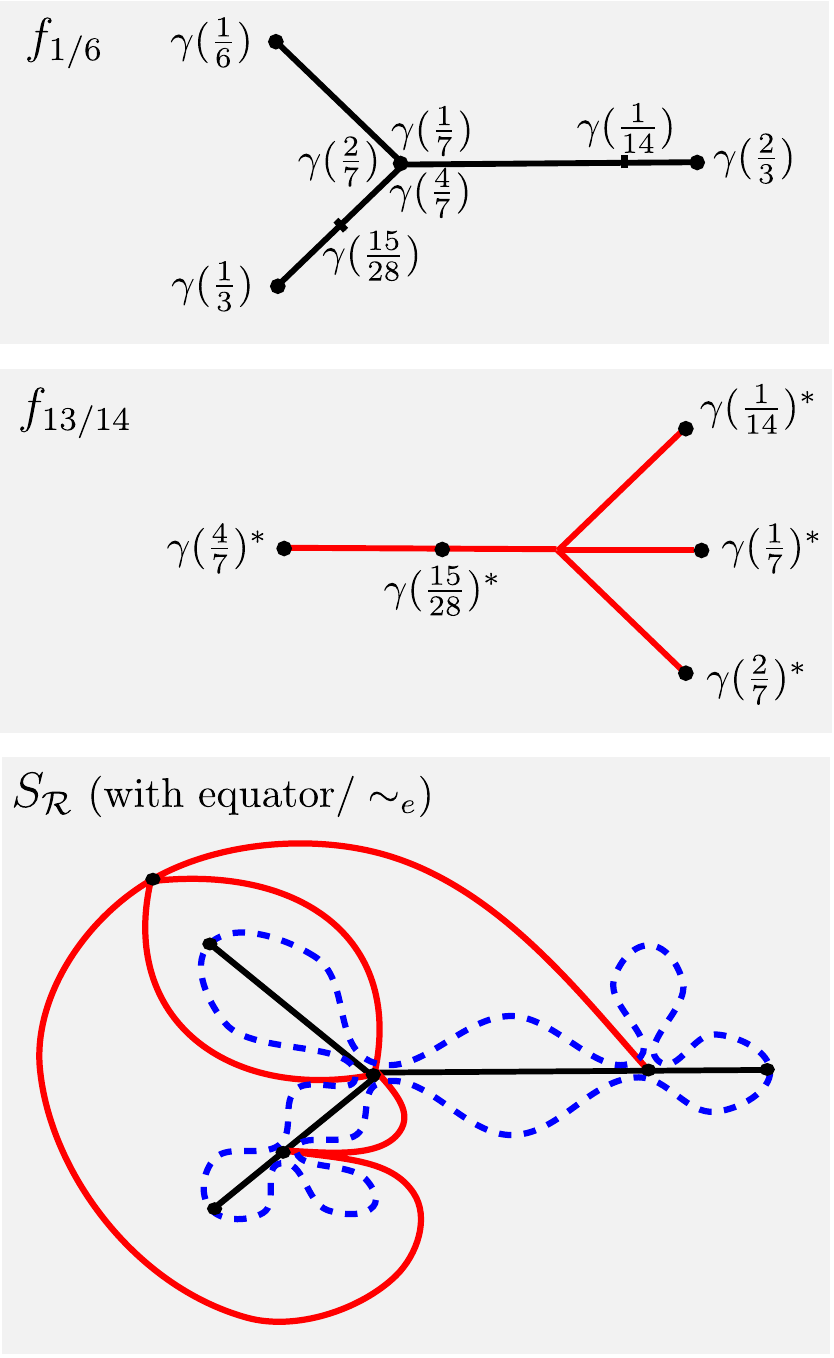}}
\caption{\label{meyerex3}{The ``pseudo-equator" is pinched by $\sim_e$ into a non-Jordan curve.}}
\end{figure}

\subsection{Implications and Future Work}
The essential finite subdivision rule constructions provide an alternative model for matings of critically preperiodic quadratic polynomials. Further, they are a useful tool for visualizing basic dynamics and modeling the mapping properties of certain matings---When paired with Bruin and Schleicher's algorithms from \cite{HUBBARDTREES}, the essential construction is simple enough that many elementary function pairings with few postcritical points can have their mapping behaviors sketched without the aid of a computer.  

In addition, these constructions serve as complementary to work in the current literature: In \cite{MEDUSA}, the Medusa algorithm is provided for obtaining rational maps from matings of quadratic polynomials, but the algorithm eventually diverges in the case of non-hyperbolic pairings.  It is the author's belief that the finite subdivision rule constructions in this paper could be used to modify the Medusa algorithm in a way that would yield rational maps from matings of non-hyperbolic polynomials.

In \cite{MATINGEXAMPLES}, the relationship between rational maps and matings is only stressed with the existence of an equator or pseudo-equator, to the exclusion of structures such as those highlighted in Figure \ref{meyerex3}. As highlighted in the above examples, two-tilings generated by the essential construction have potential to show how non-hyperbolic mated maps are related to different space-filling curves on the two-sphere: the quotient of the equator on $\mathbb{S}^2$ with respect to $\sim_t$ is a topological two-sphere, and 1-skeletons of subdivisions of the two-tiling give subsequent approximations to this quotient space. The essential construction and these two-tilings should provide further insight on the conditions in which postcritically finite rational maps can be realizable as matings, and suggest alternative structures to consider when rational maps do not have pseudo-equators.

\section*{Acknowledgements}

Many of the Julia set graphics throughout the paper were created with the assistance of the dynamics software Mandel 5.8. (See \cite{MANDEL}.) The author would also like to extend sincere thanks to William Floyd for his guidance, as much of the supporting work presented in this paper was done under his advising at Virginia Tech.

\end{document}